\documentclass[a4paper, 11pt]{article}

\usepackage[
            includefoot,  
            marginparwidth=0in,     
            marginparsep=0in,       
            margin=1.45in,               
            includemp]{geometry}

\usepackage{bbm}
\usepackage{float}
\usepackage{graphicx}                  
\usepackage{amssymb}
\usepackage{amsfonts}
\RequirePackage{amsmath}
\RequirePackage{amsthm}

\usepackage{color}

\usepackage{fancyhdr}

\newtheorem{thm}{Theorem}[section]

\newtheorem{prop}[thm]{Proposition}

\pagestyle{myheadings}

\newtheorem{rmq}{Remark}[section]

\DeclareMathOperator{\Hess}{Hess}

\DeclareMathOperator{\Ent}{Ent}

\newcommand{\R}{\mathbb{R}}

\begin{document}

\title{A sharp symmetrized form of Talagrand's transport-entropy inequality for the Gaussian measure}
\date{\today}
\author{Max Fathi}

\maketitle

\begin{abstract}
This note presents a sharp transport-entropy inequality that improves on Talagrand's inequality for the Gaussian measure, arising as a dual formulation of the functional Santal\'o inequality. We also discuss some extensions and connections with concentration of measure.
\end{abstract}

\section{Introduction and main result}

Talagrand's inequality for the standard Gaussian measure $\gamma$ on $\R^d$, originally proved in \cite{Ta96}, states that for any probability measure $\nu$, we have
$$W_2(\nu, \gamma)^2 \leq 2\Ent_{\gamma}(\nu).$$

Here, $W_2$ stands for the usual $L^2$ Kantorovitch-Wasserstein distance on the space of probability measures, and $\Ent_{\gamma}$ stands for the relative entropy with respect to the Gaussian measure. 

The main result of this note is the following improvement: 

\begin{thm} \label{main_thm}
Let $\mu$ be a centered probability measure, and $\nu$ be another probability measure. Then 
$$W_2(\mu, \nu)^2 \leq 2\Ent_{\gamma}(\mu) + 2\Ent_{\gamma}(\nu).$$

Moreover, equality holds iff there exists a symmetric positive matrix $A$ such that $\mu$ is a non-degenerate centered Gaussian measure with covariance $A$ and $\nu$ is a Gaussian measure with covariance $A^{-1}$ (not necessarily centered). 
\end{thm}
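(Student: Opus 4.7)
The plan is to pass through Kantorovich duality and reduce the claim to an inequality about convex functions and their Legendre transforms, which can then be proved by combining the Gibbs variational principle with the functional Santal\'o inequality of Artstein-Klartag-Milman. Writing a generic Kantorovich potential for the quadratic cost as $\varphi(x) = |x|^2/2 - f(x)$ with $f$ convex, the dual formula for $\frac{1}{2}W_2(\mu,\nu)^2$, together with the standard identity $\Ent_\gamma(\mu) = \int \frac{|x|^2}{2}\,d\mu + \frac{d}{2}\log(2\pi) - h(\mu)$ (and likewise for $\nu$, where $h$ denotes the differential entropy), reduces the target inequality to
$$\int f\,d\mu + \int f^*\,d\nu \,\geq\, h(\mu) + h(\nu) - d\log(2\pi) \qquad \text{for every convex } f.$$

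To prove this, I apply the Gibbs variational principle on each side. On the $\mu$-side, applied to the potential $f - \langle a, \cdot\rangle$ for any $a \in \R^d$, it yields $\int f\,d\mu \geq h(\mu) + \langle a, m_\mu\rangle - \log\int e^{-f(x) + \langle a,x\rangle}\,dx$, and the linear term $\langle a, m_\mu\rangle$ vanishes precisely because $\mu$ is centered, leaving us free to choose $a$ as we like. On the $\nu$-side, standard Gibbs gives $\int f^*\,d\nu \geq h(\nu) - \log\int e^{-f^*}\,dy$. I then choose $a = a_f$ so that the translated function $\tilde f(x) := f(x) - \langle a_f, x\rangle$ has barycenter zero in the sense $\int x\, e^{-\tilde f(x)}\,dx = 0$; such an $a_f$ exists uniquely by strict convexity of the log-Laplace transform of $e^{-f}$. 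Since the Legendre dual $\tilde f^*(y) = f^*(y + a_f)$ has the same Lebesgue integral as $f^*$ by translation invariance of Lebesgue measure, the functional Santal\'o inequality applied to the centered $\tilde f$ delivers
$$\int e^{-f + \langle a_f, x\rangle}\,dx \cdot \int e^{-f^*}\,dy \,=\, \int e^{-\tilde f}\,dx \cdot \int e^{-\tilde f^*}\,dy \,\leq\, (2\pi)^d,$$
and summing the two Gibbs bounds while plugging in this product bound gives the reduced inequality.

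The core conceptual point — where I expect the main work to go — is that centering of $\mu$ is precisely what lets us shift $f$ by an arbitrary linear function at no cost in $\int f\,d\mu$, and thereby invoke Santal\'o in its sharp centered form even for an arbitrary $f$ coming out of the Kantorovich dual; without the centering the extra linear term $\langle a_f, m_\mu\rangle$ would reappear and the bound would degrade (consistent with the fact that the inequality fails for two non-centered Gaussians with opposite means). A secondary task is to restrict to sufficiently regular convex $f$ so that Santal\'o applies, and to handle singular measures by a standard approximation.

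For the equality case, one tracks when each of the three inequalities becomes an equality simultaneously. Equality in Gibbs forces $\mu \propto \exp(-f + \langle a_f, \cdot\rangle)$ and $\nu \propto \exp(-f^*)$, while the equality case of the functional Santal\'o inequality forces $\tilde f$ to be a non-degenerate positive definite quadratic form, say $\tilde f(x) = \langle x, Bx\rangle/2 + c$. Together these force $\mu$ to be a centered Gaussian with covariance $B^{-1}$ and $\nu$ to be a (not necessarily centered) Gaussian with covariance $B$, which upon setting $A := B^{-1}$ matches exactly the stated equality characterization.
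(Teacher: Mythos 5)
Your proof is correct, but it follows a genuinely different route from the one the paper gives for Theorem \ref{main_thm}. You go through Kantorovich duality for the quadratic cost, the Gibbs variational characterization of entropy, and Lehec's functional Santal\'o inequality; after the linear shift by $a_f$ (which is legal precisely because $\mu$ is centered), the reduced inequality $\int f\,d\mu + \int f^*\,d\nu \geq h(\mu)+h(\nu)-d\log(2\pi)$ follows directly. This is precisely the ``simplest way'' that the paper alludes to but deliberately does not use for Theorem \ref{main_thm}: it is, up to the cosmetic change of writing the cost as $|x-y|^2/2$ rather than $-x\cdot y$, the content of the paper's proof of the equivalence Proposition between the symmetrized Talagrand inequality and functional Santal\'o. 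The paper's own proof of the theorem instead uses Santambrogio's characterization of the moment map $\rho=e^{-\varphi}dx$ as the minimizer of $\nu\mapsto\Ent_\gamma(\nu)-\tfrac12 W_2(\nu,\mu)^2$, combined with the reverse Gaussian log-Sobolev inequality of Artstein-Avidan--Klartag--Sch\"utt--Werner applied to $\rho$, and the Monge--Amp\`ere equation $e^{-\varphi}=f(\nabla\varphi)\det\nabla^2\varphi$ to relate the entropies of $\mu$ and $\rho$. Your argument is shorter and more self-contained once Santal\'o is granted; the paper's moment-map proof is chosen to showcase the optimal-transport/calculus-of-variations structure and to set up the stability discussion in the paper. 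One technical caveat you raise yourself and which does need care: in your reduction you use the bound for ``every convex $f$,'' but for non-smooth or non-coercive $f$ the quantities $\int e^{-\tilde f}\,dx$ or $\int f^*\,d\nu$ can be infinite; this is handled by noting that if $e^{-\tilde f}$ fails to be integrable then $f^*$ is $+\infty$ off a lower-dimensional set and $\int f^*\,d\nu=+\infty$ for absolutely continuous $\nu$, so the inequality is vacuous there, and then reducing to absolutely continuous $\mu,\nu$ by approximation. For the equality case you should also explicitly verify the ``if'' direction (a direct computation), and note that equality in the Kantorovich dual pins down $f$ as the Brenier potential, which is consistent with the Gaussian pair you identify.
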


We can then recover Talagrand's inequality by taking $\mu = \gamma$. The assumption that one of the two measures must be centered cannot be removed in general: if we take $\mu$ and $\nu$ both as non-centered standard Gaussian measures, with respective barycenter $m_1$ and $m_2$, the inequality would become $|m_1 - m_2|^2 \leq |m_1|^2 + |m_2|^2$, which does not hold for all choices of $m_1$ and $m_2$.

\begin{rmq}
It is easy to check using the triangle inequality for $W_2$ that Talagrand's inequality implies $W_2(\mu, \nu)^2 \leq 4\Ent_{\gamma}(\mu) + 4\Ent_{\gamma}(\nu)$. The point here is that we can improve the prefactor in such a way that the statement becomes strictly stronger than Talagrand's inequality. This may be useful for applications where sharp estimates are desirable. For example, we will not lose a factor $2$ when deriving concentration inequalities from the functional inequality. See also \cite{GRST} for applications of symmetrized transport-entropy inequalities to concentration of measure. 
\end{rmq}

It turns out that this inequality is dual to the functional Santal\'o inequality \cite{AKM, Leh09}. This connection was pointed out to us by N. Gozlan. 

\begin{thm}[Functional Santal\'o inequality] \label{thm_equ}
Let $f$ and $g$ be measurable functions on $\R^d$ satisfying $f(x) + g(y) \leq -x \cdot y$ for all $x, y \in \R^d$. If $e^f$ (or $e^g$) has its barycenter at zero, then 
\begin{equation} \label{funct_san}
\left(\int{e^fdx}\right)\left(\int{e^gdx}\right) \leq (2\pi)^d.
\end{equation}
\end{thm}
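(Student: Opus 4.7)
The plan is to deduce Theorem~\ref{thm_equ} from Theorem~\ref{main_thm} by Kantorovich--Rubinstein duality for the quadratic cost, exploiting the fact that the two inequalities are formal convex duals of one another. The rough idea is that the hypothesis $f(x)+g(y)\leq -x\cdot y$ encodes, after a Gaussian shift, precisely a Kantorovich admissibility condition for the cost $|x-y|^2/2$, while the integrals $\int e^f$ and $\int e^g$ are log-partition functions dual (via Donsker--Varadhan) to the Gaussian relative entropies appearing in Theorem~\ref{main_thm}.

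First I would introduce the normalized probability measures $d\mu = Z_f^{-1} e^{f}\,dx$ and $d\nu = Z_g^{-1} e^{g}\,dx$, with $Z_f = \int e^f\,dx$ and $Z_g = \int e^g\,dx$; one may assume both are finite, since the inequality is otherwise vacuous or trivial. The hypothesis that $e^f$ has zero barycenter translates directly into centeredness of $\mu$, so Theorem~\ref{main_thm} applies to the pair $(\mu,\nu)$.

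Next I would produce a lower bound on $\tfrac12 W_2^2(\mu,\nu)$ by testing the Kantorovich dual formula against $\phi(x) = f(x) + |x|^2/2$ and $\psi(y) = g(y) + |y|^2/2$. The hypothesis $f(x)+g(y) \leq -x\cdot y$ is exactly what makes $\phi(x)+\psi(y) \leq |x-y|^2/2$, so Kantorovich duality for the quadratic cost gives
\[
\int \phi\,d\mu + \int \psi\,d\nu \;\leq\; \tfrac12 W_2^2(\mu,\nu).
\]
Combining this with Theorem~\ref{main_thm} and expanding $\Ent_\gamma(\cdot)$ against the Gaussian density, using $\log(d\mu/dx) = f - \log Z_f$ (and similarly for $\nu$), the terms $\int f\,d\mu$ and $\tfrac12\int |x|^2\,d\mu$ cancel with their counterparts on the right-hand side, together with the $\nu$-analogues, leaving exactly $\log Z_f + \log Z_g \leq d\log(2\pi)$, which is the desired inequality after exponentiation.

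The main obstacle is not conceptual but technical: one must verify that $\Ent_\gamma(\mu), \Ent_\gamma(\nu)$ are finite and that Kantorovich duality applies in the generality of measurable $f,g$. A standard truncation or regularization argument (replacing $f$ by $f\wedge n$ and passing to the limit, say) should handle this. The conceptual content is already captured by Theorem~\ref{main_thm}, and the cancellation step is essentially the Donsker--Varadhan variational principle, which simultaneously makes the implication reversible and explains why the two statements are dual.
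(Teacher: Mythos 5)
Your computation is correct as a \emph{derivation of Theorem~\ref{thm_equ} from Theorem~\ref{main_thm}}, and it coincides essentially line for line (modulo the translation $|x-y|^2/2 = |x|^2/2 + |y|^2/2 - x\cdot y$) with the ``symmetrized Talagrand $\Rightarrow$ Santal\'o'' direction of the paper's proof of the equivalence Proposition. However, this cannot serve as a \emph{proof} of Theorem~\ref{thm_equ} within the logical structure of the paper, for two reasons. First, the paper does not prove Theorem~\ref{thm_equ} at all: it is stated as an external result attributed to Lehec~\cite{Leh09} (building on \cite{Bal, AKM}), and the paper only establishes the \emph{equivalence} with Theorem~\ref{main_thm}. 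Second, and more seriously, the argument is circular in this context: the paper's proof of Theorem~\ref{main_thm} uses the reverse Gaussian logarithmic Sobolev inequality (Theorem~\ref{thm_acglsw}), whose proof in \cite{CFGLSW} itself relies on the functional Santal\'o inequality. The paper flags this explicitly (``an alternate proof, which still relies on results derived using the functional Santal\'o inequality''). So deducing Santal\'o from Theorem~\ref{main_thm} proves nothing new unless one has an independent proof of Theorem~\ref{main_thm}, which the paper does not supply. Your claim that ``the conceptual content is already captured by Theorem~\ref{main_thm}'' inverts the paper's actual dependency: Santal\'o is the logically prior ingredient here, and an independent proof of it (e.g.\ Lehec's stochastic argument) is what the paper is resting on.

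As a side remark, the cancellation you describe is correct: writing $\log\frac{d\mu}{d\gamma} = f + |x|^2/2 - \log Z_f + \tfrac{d}{2}\log(2\pi)$ and similarly for $\nu$, the terms $\int f\,d\mu$, $\tfrac12\int|x|^2\,d\mu$, $\int g\,d\nu$, $\tfrac12\int|y|^2\,d\nu$ cancel against the Kantorovich testing bound, leaving $\log Z_f + \log Z_g \leq d\log(2\pi)$. This is exactly the computation the paper performs in displayed form as equation~\eqref{eq_sant}, so your calculation is not in doubt; only its logical role is.
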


This statement is due to Lehec \cite{Leh09}, and improves on previous results of Ball \cite{Bal} (for even functions) and Artstein, Klartag and Milman \cite{AKM} (where the function with barycenter at zero was assumed to be concave). It is a functional generalization of a result of Santal\'o on volumes of convex bodies \cite{San49}. See also \cite{FM08} for related results. 

\begin{prop}
The sharp symmetrized Talagrand inequality and the functional Santal\'o inequality are equivalent. 
\end{prop}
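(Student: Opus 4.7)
The plan is to prove both implications through a duality argument combining Kantorovich duality for $W_2$ with the Donsker--Varadhan variational formula for the relative entropy. The change of variables $\phi(x) = f(x) + |x|^2/2$, $\psi(y) = g(y) + |y|^2/2$ converts Santal\'o admissibility $f(x) + g(y) \leq -x\cdot y$ into Kantorovich admissibility $\phi(x) + \psi(y) \leq |x-y|^2/2$, and the (apparently different) conditions of barycenter zero in Santal\'o and centering in Talagrand will be reconciled through a Lagrange-multiplier computation on the entropy side.

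For the easier direction symmetrized Talagrand $\Rightarrow$ Santal\'o, I would take admissible $(f,g)$ with $e^f$ of barycenter zero and let $Z_f = \int e^f$, $Z_g = \int e^g$, $\mu = Z_f^{-1} e^f\, dx$, $\nu = Z_g^{-1} e^g\, dx$, so that $\mu$ is centered. Kantorovich duality applied to the admissible pair $(\phi,\psi)$ gives $W_2(\mu,\nu)^2/2 \geq \int \phi\, d\mu + \int \psi\, d\nu$, while a direct computation yields $\Ent_\gamma(\mu) = \int(f + |x|^2/2)\, d\mu - \log Z_f + (d/2)\log(2\pi)$ and analogously for $\nu$. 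Substituting into the symmetrized Talagrand bound, the $\int f\, d\mu$ and $\int g\, d\nu$ terms cancel between the two sides, leaving $\log(Z_f Z_g) \leq d\log(2\pi)$, which is Santal\'o.

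For the converse, Kantorovich duality rewrites the symmetrized Talagrand inequality as the requirement that $\int f\, d\mu + \int g\, d\nu \leq d\log(2\pi) - H(\mu) - H(\nu)$ (with $H$ the differential entropy) hold for every admissible $(f, g)$. The Donsker--Varadhan bound $\int f\, d\mu + H(\mu) \leq \log Z_f$ is not tight at an arbitrary centered $\mu$, but Lagrange duality applied to the constraint $\int x\, d\mu = 0$ gives the tight identity
\[
\sup_{\mu \text{ centered}} \left\{ \int f\, d\mu + H(\mu) \right\} = \inf_{\lambda \in \R^d} \log Z_{f - \lambda\cdot x},
\]
with the infimum attained at the $\lambda^* = \lambda^*(f)$ for which $e^{f(x) - \lambda^*\cdot x}$ has barycenter zero. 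Combined with the unconstrained Donsker--Varadhan bound on $\nu$, the symmetrized Talagrand inequality becomes equivalent to $Z_{f - \lambda^*\cdot x}\, Z_g \leq (2\pi)^d$ for every admissible $(f, g)$.

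The last step is to deduce this from Santal\'o, which is where the shift trick pays off. I would apply Santal\'o to the pair $(f(x) - \lambda^*\cdot x,\, \tilde g(y - \lambda^*))$, where $\tilde g(y) = \inf_x \{-x\cdot y - f(x)\}$ is the $(-xy)$-conjugate of $f$: this pair is Santal\'o-admissible by a direct check, and by definition of $\lambda^*$ the first function has $e^{f - \lambda^*\cdot x}$ with barycenter zero. Santal\'o then gives $Z_{f - \lambda^*\cdot x}\cdot Z_{\tilde g(\cdot - \lambda^*)} \leq (2\pi)^d$; using translation invariance $Z_{\tilde g(\cdot - \lambda^*)} = Z_{\tilde g}$ together with $Z_g \leq Z_{\tilde g}$ (since $g \leq \tilde g$ pointwise by admissibility) closes the argument. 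The main subtle point is the Lagrange-duality identity itself, which needs mild regularity (integrability and coercivity of $\lambda \mapsto \log Z_{f - \lambda\cdot x}$) to guarantee existence of the minimizer $\lambda^*$ and to justify exchanging $\sup$ and $\inf$.
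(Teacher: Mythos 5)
Your proposal is correct and matches the paper's approach: both directions go through Kantorovich duality for the cost $-x\cdot y$ (equivalently, your $\phi,\psi$ change of variables) together with the variational formula for entropy, with a linear shift to reconcile the centering of $\mu$ against the barycenter-zero hypothesis in Santal\'o — the paper shifts the second potential to $g(\cdot+\lambda)$ and keeps it, while you substitute the $(-x\cdot y)$-conjugate $\tilde g$ of $f$ and use $g\le\tilde g$, hence $Z_g\le Z_{\tilde g}$, which is a harmless variation. The strong Lagrange-duality identity you flag as the delicate point is not actually needed for the implication Santal\'o $\Rightarrow$ Talagrand: only the elementary weak-duality bound $\int f\,d\mu + H(\mu) \le \log Z_{f-\lambda^*\cdot x}$ is used, and this follows at once from $\int \lambda^*\cdot x\,d\mu = 0$ for centered $\mu$, so the regularity concerns at the end can be dropped.
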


Duality between transport-entropy inequalities and integral bounds go back to \cite{BG}, which gave a dual formulation of classical transport-entropy inequalities. 

The simplest way to prove Theorem \ref{main_thm} is to derive it from the above inequality by a duality argument. We shall nonetheless give an alternate proof, which still relies on results derived using the functional Santal\'o inequality. Despite not being the simplest proof, we believe it is of some interest, as it highlights the connection to optimal transport and calculus of variations. 

As was pointed out by Klartag \cite{Kla07} (and discovered independently by Barthe and Cordero-Erausquin), such inequalities can be pushed to uniformly log-concave measures using the Caffarelli contraction theorem, leading to the following variant of Theorem \ref{main_thm}. This statement was pointed out to us by Dario Cordero-Ersausquin.  

\begin{thm}
\label{extended_thm_ulc}
Let $\theta =e^{-V}dx$ be a symmetric, uniformly log-concave probability measure, that is the potential $V$ is smooth and satisfies $\Hess V \geq \alpha \operatorname{Id}$ for some $\alpha > 0$. Then for any symmetric probability measure $\mu$ and any other probability measure $\nu$, we have
$$W_2(\mu, \nu)^2 \leq \frac{2}{\alpha}(\Ent_{\theta}(\mu) + \Ent_{\theta}(\nu)).$$
\end{thm}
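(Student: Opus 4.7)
The plan is to reduce to the Gaussian case via Caffarelli's contraction theorem. Let $T : \R^d \to \R^d$ be the Brenier map sending the standard Gaussian $\gamma$ onto $\theta$. Since both $\gamma$ and $\theta$ are centrally symmetric, the map $x \mapsto -T(-x)$ is also the gradient of a convex function pushing $\gamma$ onto $\theta$; uniqueness of the Brenier map then forces $T$ to be odd, that is $T(-x) = -T(x)$. Caffarelli's contraction theorem, applied under the hypothesis $\Hess V \geq \alpha \operatorname{Id}$, shows that $T$ is $\alpha^{-1/2}$-Lipschitz (the case $\alpha=1$ is the original statement, and the general case follows by scaling).

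Set $\tilde{\mu} := (T^{-1})_{\#}\mu$ and $\tilde{\nu} := (T^{-1})_{\#}\nu$. Because $T^{-1}$ is odd and $\mu$ is centrally symmetric, the pullback $\tilde{\mu}$ is centrally symmetric, and in particular centered. Thus Theorem \ref{main_thm} applies to $(\tilde{\mu}, \tilde{\nu})$ and yields
$$W_2(\tilde{\mu}, \tilde{\nu})^2 \leq 2\Ent_{\gamma}(\tilde{\mu}) + 2\Ent_{\gamma}(\tilde{\nu}).$$

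It then remains to translate this estimate back through $T$. Since $T_{\#}\gamma = \theta$, relative entropy is invariant under simultaneous pushforward, so $\Ent_{\gamma}(\tilde{\mu}) = \Ent_{\theta}(\mu)$ and likewise for $\nu$. On the transport side, pushing any coupling of $(\tilde{\mu},\tilde{\nu})$ through $T \otimes T$ yields a coupling of $(\mu,\nu)$ whose quadratic cost is, by the Lipschitz bound, at most $\alpha^{-1}$ times the original, giving $W_2(\mu,\nu)^2 \leq \alpha^{-1} W_2(\tilde{\mu},\tilde{\nu})^2$. Stringing the three inequalities together produces the claim.

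The only real subtlety is why the hypothesis on $\mu$ must be strengthened from ``centered'' (as in Theorem \ref{main_thm}) to ``symmetric'': we need the nonlinear pullback $\tilde{\mu}$ to remain centered in order to apply Theorem \ref{main_thm}, and the cleanest way to guarantee this is to combine the oddness of $T^{-1}$ with the symmetry of $\mu$. If $\theta$ were not centrally symmetric the Brenier map would generally fail to be odd, and the reduction would collapse. Everything else---the Lipschitz constant in Caffarelli's theorem after rescaling, and the compatibility of pushforwards with both relative entropy and the quadratic Wasserstein distance---is routine.
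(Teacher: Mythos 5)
Your proof matches the paper's argument essentially line for line: both reduce to the Gaussian case via Caffarelli's contraction theorem, note that the symmetry of $\theta$ (together with uniqueness of the Brenier map) forces $T$ to be odd so that $\tilde{\mu} = (T^{-1})_{\#}\mu$ is centered, apply Theorem \ref{main_thm} to the pulled-back measures, and then transfer back using the $\alpha^{-1/2}$-Lipschitz bound for the Wasserstein distance and the invariance of relative entropy under simultaneous pushforward. The only (cosmetic) difference is that the paper adds a remark on reducing to the case where $\theta$ has a density, which is moot here since $\theta = e^{-V}dx$ is already absolutely continuous.
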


The symmetry assumptions on $\mu$ and $\theta$ could be relaxed, and we would then have to assume instead that $\int{Td\mu} = 0$, where $T$ is the optimal transport map sending $\theta$ onto $\gamma$. 

Section 2 will contain the proofs of the results we just described. Section 3 will present a reverse form of the improved Talagrand inequality, under some convexity and symmetry assumptions on the measures. Finally, Section 4 will describe a concentration estimate that will be easily deduced from Theorem \ref{main_thm}, in connection with Maurey's property $(\tau)$. 

\section{Proofs}

\subsection{Proof of Theorem \ref{main_thm}}

The proof uses the following two statements. The first is a result of Santambrogio \cite{San16}

\begin{thm} \label{thm_santambrogio}
Let $\mu$ be a centered probability measure that is not supported on a hyperplane. Then there exists an essentially continuous convex function $\varphi$, unique up to translations, such that $\rho = e^{-\varphi}dx$ is a probability measure on $\R^d$ whose pushforward by the map $\nabla \varphi$ is $\mu$. Moreover, it satisfies
$$\rho = \operatorname{argmin} \left\{ -\frac{1}{2}W_2(\mu, \nu)^2 + \Ent_{\gamma}(\nu); \nu \in \mathcal{P}_1(\R^d) \right\}.$$
\end{thm}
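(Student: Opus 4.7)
The plan is to obtain $\rho$ as the solution of a dual variational problem via Kantorovich duality, following Santambrogio's strategy. Denoting $H(\nu) := \int \log\rho_\nu\, d\nu$ (with $\rho_\nu$ the Lebesgue density of $\nu$), the decomposition $\Ent_\gamma(\nu) = H(\nu) + \frac{1}{2}\int |y|^2 d\nu + \frac{d}{2}\log(2\pi)$ combined with $\frac{1}{2}W_2(\mu,\nu)^2 = \frac{1}{2}\int|x|^2 d\mu + \frac{1}{2}\int|y|^2 d\nu - \sup_{\pi \in \Pi(\mu,\nu)}\int x\cdot y\, d\pi$ reduces, up to a $\mu$-dependent additive constant, the minimization of $-\frac{1}{2}W_2^2 + \Ent_\gamma$ to that of
\begin{equation*}
H(\nu) + \sup_{\pi \in \Pi(\mu,\nu)} \int x\cdot y\, d\pi(x,y).
\end{equation*}
Kantorovich duality for the cost $-x\cdot y$ rewrites the inner supremum as $\inf_\varphi \{\int \varphi^* d\mu + \int\varphi\, d\nu\}$, the infimum running over convex lower semicontinuous $\varphi$.

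Next I would exchange the order of infima. For fixed $\varphi$, Gibbs' variational principle yields $\inf_\nu\{H(\nu) + \int\varphi\, d\nu\} = -\log\int e^{-\varphi}dx$, with minimizer $\nu$ proportional to $e^{-\varphi}$. Assuming the swap is justified by standard convex duality, the problem becomes
\begin{equation*}
\sup_{\varphi} \left\{\log \int e^{-\varphi}dx - \int\varphi^* d\mu\right\},
\end{equation*}
the supremum taken over convex lower semicontinuous $\varphi$. Absorbing the normalization into $\varphi$, the minimizer is then $\rho = e^{-\varphi}dx$.

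The Euler-Lagrange equation will then yield the transport property. Varying $\varphi$ by $\epsilon\eta$ and using $\frac{d}{d\epsilon}(\varphi+\epsilon\eta)^*\big|_{\epsilon=0} = -\eta\circ\nabla\varphi^*$ (which holds $\mu$-a.e.\ thanks to essential continuity of $\varphi$), the first-order condition
\begin{equation*}
-\int \eta\, d\rho + \int \eta\circ\nabla\varphi^*\, d\mu = 0 \qquad \text{for all test } \eta
\end{equation*}
reads $(\nabla\varphi^*)_\#\mu = \rho$, equivalently $(\nabla\varphi)_\#\rho = \mu$.

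The main obstacle lies in the existence and regularity of the maximizer $\varphi$. The dual functional is invariant under $\varphi \mapsto \varphi(\cdot - a)$: a direct computation shows this shifts $\int\varphi^* d\mu$ by $a \cdot \int y\, d\mu(y)$, which vanishes precisely because $\mu$ is centered (accounting for uniqueness only up to translations). Were $\mu$ not centered, the supremum would be $+\infty$ in some translation direction. Coercivity along maximizing sequences, after quotienting by this symmetry, must therefore be established carefully; the non-degeneracy hypothesis (that $\mu$ is not supported on any hyperplane) prevents $\varphi$ from being affine along some direction, which would make $e^{-\varphi}$ non-integrable, and essential continuity of $\varphi$ — continuity outside a negligible set, needed for $\nabla\varphi^*$ to be well-defined $\mu$-a.e.\ — is then obtained from standard interior regularity of convex functions on the interior of their domain.
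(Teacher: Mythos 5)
The paper does not prove this theorem; it is quoted from the literature, with the existence and uniqueness of the moment map attributed to Cordero-Erausquin and Klartag \cite{CEK15} and the variational characterization of $\rho$ to Santambrogio \cite{San16}. Your sketch does follow the broad lines of that literature: the reduction of $-\tfrac12 W_2^2 + \Ent_\gamma$ to $H(\nu) + \sup_\pi\int x\cdot y\,d\pi$, the passage by Kantorovich duality and the Gibbs principle to the dual functional $\varphi\mapsto\int\varphi^*\,d\mu - \log\int e^{-\varphi}\,dx$, and the Euler--Lagrange identification of the transport map are indeed the skeleton of the argument in \cite{CEK15}, while the identity between the optimal $e^{-\varphi}dx$ and the primal minimizer $\rho$ is the additional observation of \cite{San16}. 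The translation-invariance computation showing why $\mu$ must be centered is correct.

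The genuine gaps are precisely where the cited papers do their work. First, coercivity: the heuristic that the non-degeneracy of $\mu$ forbids affine directions and hence non-integrability of $e^{-\varphi}$ correctly identifies the role of that hypothesis, but it does not establish that maximizing sequences, after quotienting by translations, are compact; this is the delicate core of \cite{CEK15} (and in \cite{San16} it is replaced by a direct tightness and lower-boundedness argument on the primal side). Second, your characterization of essential continuity is incorrect. It is not ``continuity outside a negligible set'' and does not follow from interior regularity: any finite convex function is continuous on the interior of its domain, and the boundary of a convex domain is already Lebesgue-null, so that reading is vacuous. Essential continuity is the requirement that the set of boundary points of $\operatorname{dom}\varphi$ where $\varphi$ is finite but fails to be continuous has zero $(d-1)$-dimensional Hausdorff measure; it governs the boundary behavior of $\varphi$, is what legitimizes the integration-by-parts bound $\int x\cdot\nabla\varphi\, e^{-\varphi}dx \le d\int e^{-\varphi}dx$ invoked later in this paper, and is exactly the normalization making the moment map unique up to translation. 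Establishing it is a separate nontrivial step in \cite{CEK15}. Finally, note that $\varphi\mapsto\varphi+\epsilon\eta$ leaves the convex cone for general test functions $\eta$ and sign of $\epsilon$, so the first-order condition requires a further regularization argument to be two-sided.
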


The first part of this statement was first proved by Cordero-Erausquin and Klartag \cite{CEK15}, and is the main focus of \cite{San16}. The second part is a byproduct of the method Santambrogio used, but turns out to be useful for our purpose. Note that this result contains the Talagrand inequality, which is obtained when taking $\mu$ to be the Gaussian measure. We refer to \cite{CEK15} for a definition of essential continuity. 

When $\rho$ is the minimizer, the above quantity can be rewritten as 
$$-\frac{1}{2}\int{|\nabla \varphi - x|^2e^{-\varphi}dx} + \Ent_{\gamma}(e^{-\varphi}).$$
The first term is the negative of the Fisher information of $\rho$, relative to the Gaussian, so this quantity can be identified as the negative of the deficit in the classical Gaussian logarithmic Sobolev inequality for $\rho$. This is a first hint that this result may be useful to study improvements to functional inequalities. Connections between deficit estimates for Gaussian functional inequalities and moment maps have also been investigated in \cite{KK18}

The second tool we shall use is a reverse form of the Gaussian logarithmic Sobolev inequality, proven in \cite{AKSW}, under some regularity assumptions. In \cite{CFGLSW}, an alternative, simpler proof was established, which removed those extra regularity assumptions. We point out that the simpler proof of \cite{CFGLSW} uses the functional Santal\'o inequality of \cite{AKM}. 

\begin{thm} \label{thm_acglsw}
Take $\rho = e^{-\varphi}dx$ a probability measure, and assume it is log-concave. Let $S(\rho) := -\int{ \log f d\rho}$ where $f= e^{-\varphi}$ with respect to the Lebesgue measure. Then 
$$S(\gamma) - S(\rho) \geq \frac{1}{2}\int{\log \det \nabla^2 \varphi d\rho}.$$
\end{thm}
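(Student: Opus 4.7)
The plan is to derive Theorem \ref{thm_acglsw} directly from the functional Santaló inequality (Theorem \ref{thm_equ}); this is the strategy that underlies the simpler proof of \cite{CFGLSW}. First I would observe that the desired inequality is translation invariant in $\varphi$: replacing $\varphi(\cdot)$ by $\varphi(\cdot + m)$, where $m$ is the barycenter of $\rho$, leaves both $\int \varphi\, d\rho$ and $\int \log\det\nabla^2\varphi\, d\rho$ unchanged while moving the barycenter of $\rho$ to the origin. I may therefore assume from the outset that $\rho$ is centered.

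Since $\rho$ is log-concave, $\varphi$ is convex, and the Fenchel--Young inequality gives $\varphi(x) + \varphi^*(y) \geq x \cdot y$, or equivalently $(-\varphi)(x) + (-\varphi^*)(y) \leq -x\cdot y$. Applying Theorem \ref{thm_equ} with $f = -\varphi$ and $g = -\varphi^*$, using that $e^{f} = \rho$ has barycenter zero and integrates to one, yields
$$\int e^{-\varphi^*(y)}\, dy \leq (2\pi)^d.$$
I would then perform the change of variables $y = \nabla\varphi(x)$, with Jacobian $\det\nabla^2\varphi(x)$, and use the Legendre identity $\varphi^*(\nabla\varphi(x)) = x\cdot \nabla\varphi(x) - \varphi(x)$ to rewrite this bound as
$$\int \exp\bigl(2\varphi(x) - x\cdot \nabla\varphi(x) + \log\det\nabla^2\varphi(x)\bigr)\, d\rho(x) \leq (2\pi)^d.$$
Jensen's inequality applied to the exponential, combined with the integration-by-parts identity $\int x\cdot \nabla \varphi\, d\rho = d$ (obtained from $\int \nabla\cdot(x e^{-\varphi})\, dx = 0$), gives
$$2\int \varphi\, d\rho - d + \int \log\det\nabla^2\varphi\, d\rho \leq d\log(2\pi),$$
which rearranges to $S(\rho) \leq S(\gamma) - \frac{1}{2}\int \log\det\nabla^2\varphi\, d\rho$ once one recognizes $S(\gamma) = \frac{d}{2}(1 + \log 2\pi)$.

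The main obstacle is regularity. The change of variables step requires $\varphi$ to be $C^2$ with positive-definite Hessian and $\nabla\varphi$ to be a diffeomorphism onto a full-measure subset of its image, while for a general log-concave density $\varphi$ might be merely convex, with affine pieces where $\det\nabla^2\varphi$ degenerates or regions where $\varphi^*$ is infinite. I would handle this by a standard approximation: regularize $\varphi$ by adding a small quadratic $\tfrac{\varepsilon}{2}|x|^2$ (or use an infimal-convolution mollification) to enforce smoothness and strict convexity, renormalize to keep $e^{-\varphi_\varepsilon}$ a probability density, apply the argument above to $\varphi_\varepsilon$, and pass to the limit using log-concavity to justify dominated/monotone convergence for each of the three integrals involved. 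Using Santaló rather than the direct optimal-transport computation of \cite{AKSW} is precisely what makes this limiting procedure go through without the extra regularity assumptions present in that work.
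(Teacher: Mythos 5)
The paper does not give its own proof of Theorem \ref{thm_acglsw}; it cites it from \cite{AKSW} and \cite{CFGLSW}, and explicitly flags that the simpler proof in \cite{CFGLSW} goes through the functional Santal\'o inequality. Your proposal reconstructs exactly that route, and the argument is correct: the reduction to centered $\rho$, the application of Santal\'o with $f=-\varphi$, $g=-\varphi^*$, the change of variables $y=\nabla\varphi(x)$ together with the Legendre identity $\varphi^*(\nabla\varphi)=x\cdot\nabla\varphi-\varphi$, Jensen's inequality, and the integration-by-parts identity $\int x\cdot\nabla\varphi\,d\rho=d$ combine to give the bound, and the constant $S(\gamma)=\tfrac{d}{2}(1+\log 2\pi)$ is accounted for correctly. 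Your remarks on regularity are also well placed: the change of variables and the integration by parts require smoothness and strict convexity, and the regularization plus limiting step is precisely the point where relying on Santal\'o (rather than the direct computation of \cite{AKSW}) avoids additional hypotheses. This is the approach the paper points the reader to, so the proposal is consistent with the paper's intent.
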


We can now give the proof of Theorem \ref{main_thm}. We can assume without loss of generality that $\mu$ is absolutely continuous with respect to the Lebesgue measure. Let $f$ be the density of $\mu$ with respect to the Lebesgue measure, and consider the convex function $\varphi$ given by Theorem \ref{thm_santambrogio}. It satisfies almost everywhere the Monge-Amp\`ere PDE
$$e^{-\varphi} = f(\nabla \varphi)\det \nabla^2\varphi.$$ 
Taking the logarithm of this PDE, a computation shows that
$$S(\mu) = S(\rho) + \int{\log \det \nabla^2 \varphi d\rho}.$$

Applying Theorem \ref{thm_acglsw}, this implies 
$$2S(\gamma) - S(\mu) - S(\rho) \geq 0$$
which is equivalent to
\begin{align*}
\Ent_{\gamma}(\mu) &+ \Ent_{\gamma}(\rho) \geq \frac{1}{2}\int{|x|^2d\rho} + \frac{1}{2}\int{|x|^2d\mu} - d \\
&\geq \frac{1}{2}\int{|x|^2d\rho} + \frac{1}{2}\int{|\nabla \varphi|^2d\rho} - \int{x \cdot \nabla \varphi d\rho} \\
&= \frac{1}{2}\int{|x - \nabla \varphi|^2d\rho} = \frac{1}{2}W_2(\mu, \rho)^2. 
\end{align*}

Here we have used the inequality $\int{x \cdot \nabla \varphi d\rho} \leq d$. If $\varphi$ was smooth, this would be an equality, immediately justified by an integration by parts. Since $\varphi$ is not necessarily very smooth, we cannot do this, but \cite{CEK15} justified that when $\varphi$ is essentially continuous (which they proved is the case here), the inequality is still true despite the lack of regularity.

Equivalently, 
$$\Ent_{\gamma}(\rho) - \frac{1}{2}W_2(\rho, \mu)^2 \geq - \Ent_{\gamma}(\mu).$$

But since Theorem \ref{thm_santambrogio} states that $\rho$ is a minimizer of $\nu \longrightarrow \Ent_{\gamma}(\nu) - \frac{1}{2}W_2(\nu, \mu)^2$, this implies that for any probability measure $\nu$ with finite first moment
$$\Ent_{\gamma}(\nu) - \frac{1}{2}W_2(\nu, \mu)^2 \geq - \Ent_{\gamma}(\mu)$$
which after rearranging the terms is the statement we were aiming to prove. 

Moreover, for equality to hold, it must also hold in the inequality $S(\gamma) - S(\rho) \geq \frac{1}{2}\int{\log \det \nabla^2 \varphi d\rho}$. Since cases of equality here are known to only be when $\rho$ is Gaussian with some positive definite covariance matrix $A$, and $\mu$ is a pushforward of $\rho$ by $\nabla \varphi$, it is then also Gaussian, and its covariance matrix is $A^{-1}$.  A standard computation confirms that equality indeed holds in such a situation. 

\subsection{Proof of Proposition \ref{thm_equ}} 

First, for convenience we reformulate the improved Talagrand inequality as 

\begin{equation} \label{reformulated_sym_t2}
\underset{\pi}{\inf} \hspace{1mm} -\int{x \cdot y d\pi} \leq \Ent_{dx}(\mu) + \Ent_{dx}(\nu) + d\log(2\pi)
\end{equation}

The left-hand side of this inequality is still a transport cost, but with cost $-x \cdot y$ instead of $|x-y|^2$. 

The equivalence between the improved Talagrand inequality and the functional Santal\'o inequality is a consequence of the dual formulations of transport cost and entropy: 

\begin{equation} \label{kant_dual} 
\underset{\pi}{\inf} \hspace{1mm} -\int{x \cdot y d\pi} = \underset{f(x) + g(y) \leq -x\cdot y}{\sup} \int{fd\mu} + \int{gd\nu};
\end{equation}

\begin{equation}
\Ent_{dx}(\mu) = \underset{f}{\sup} \int{fd\mu} - \log \int{e^fdx}.
\end{equation}

The first identity is the Kantorovitch dual formulation of the optimal transport problem with cost $-x\cdot y$ (see for example \cite{Vil05}), while the second identity is the classical reformulation of entropy as the Legendre transform of the log-Laplace functional. 

Let us first prove that the symmetrized Talagrand inequality implies the functional Santal\'o inequality. Take $f$ and $g$ such that $f(x) + g(y) \leq -x\cdot y$ for all $x, y$, and such that $\int{xe^fdx} = 0$. Applying  the Talagrand inequality with $\mu = e^f\left(\int{e^f}\right)^{-1}$ and $\nu = e^g\left(\int{e^g}\right)^{-1}$, we get after taking into account \eqref{kant_dual} 
\begin{equation} \label{eq_sant}
\int{fd\mu} + \int{gd\nu} \leq \int{fd\mu} - \log \int{e^fdx} + \int{gd\nu} - \log \int{e^gdx} + d\log(2\pi)
\end{equation}
which is easily seen to be the same thing as \eqref{funct_san}. 

For the converse, fix $\mu$ a centered probability measure, and $\nu$ any probability measure, and consider $f$ and $g$ satisfying $f(x) + g(y) \leq - x\cdot y$. There exists $\lambda \in \R^d$ such that $\int{xe^{f + \lambda\cdot x}dx} = 0$. Indeed, the condition on $f$ and $g$ implies $f$ decays to $-\infty$ at infinity faster than any linear function, so this quantity is well defined for all $\lambda$, it is a smooth monotone function in $\lambda$ so its range is convex, and any coordinate is unbounded, so its range is the whole space. Let $\tilde{f}(x) = f(x) + \lambda \cdot x$ and $\tilde{g}(y) = g(y + \lambda)$. Then $\tilde{f}(x) + \tilde{g}(y) \leq -x \cdot y$ and $\int{xe^{\tilde{f}}dx} = 0$. Applying the Santal\'o inequality, we get
$$\log \int{e^{\tilde{f}}dx} + \log \int{e^{\tilde{g}}dx} \leq d\log(2\pi).$$
Since $\int{fd\mu} = \int{\tilde{f}d\mu}$ because $\mu$ is centered, and since $\int{e^{g}dx} = \int{e^{\tilde{g}}dx}$, we get 
$$\int{fd\mu} + \int{gd\nu} \leq \int{\tilde{f}d\mu} - \log\int{e^{\tilde{f}}dx} + \int{gd\nu} - \log \int{e^{g}dx} + d\log(2\pi)$$
$$\leq \Ent_{dx}(\mu) + \Ent_{dx}(\nu) +d\log(2\pi)$$
and taking the supremum over all $f$ and $g$ yields \eqref{reformulated_sym_t2}, which concludes the proof. 

\subsection{Proof of Theorem \ref{extended_thm_ulc}}

The argument is the same as in \cite{Kla07}, we use the Caffarelli contraction theorem \cite{Caf00}, which states that there exists a $\alpha^{-1/2}$ Lipschitz map $T$ sending $\gamma$ onto $\theta$. Without loss of generality, we may assume $\theta$ has a density. Indeed, otherwise its support is included in a hyperplane, and we can iterate the argument in $\R^{d-1}$, unless it is supported on a single point, in which case there is nothing to prove. Then the map $T$ is invertible, and we can consider its inverse $T^{-1}$ (which is the optimal transport map sending $\theta$ onto $\gamma$). Let $\tilde{\mu}$ (resp. $\tilde{\nu}$) be the image of $\mu$ (resp. $\nu$) by $T^{-1}$. By symmetry of $\theta$, $T$ is a symmetric function, and hence $\tilde{\mu}$ is still a centered measure. We then have
\begin{align*}
W_2(\mu, \nu)^2 &\leq \frac{1}{\alpha}W_2(\tilde{\mu}, \tilde{\nu})^2 \leq \frac{2}{\alpha}(\Ent_{\gamma}(\tilde{\mu}) + \Ent_{\gamma}(\tilde{\nu})) \\
&= \frac{2}{\alpha}(\Ent_{\gamma \circ T}(\tilde{\mu} \circ T) + \Ent_{\gamma\circ T}(\tilde{\nu}\circ T)) = \frac{2}{\alpha}(\Ent_{\theta}(\mu) + \Ent_{\theta}(\nu))
\end{align*}
which completes the proof. We could remove the symmetry assumption on $\theta$ and $\mu$ by requiring instead that the image of $\mu$ by $T^{-1}$ is centered. 

\subsection{A remark on stability}

Given a functional inequality $\mathcal{F}(f) \leq \mathcal{G}(f)$ with sharp constants for which all equality cases are known, a natural question is to determine whether one can prove an improvement of the form $\mathcal{F}(f) + d(f, E)^{\alpha} \leq \mathcal{G}(f)$, where $E$ is the set of functions for which equality holds, and $d$ is some suitable distance on the space of functions or measures considered. This problem has been recently studied for several Gaussian functional inequalities, such as the isoperimetric problem \cite{Eld, BBJ}, the logarithmic Sobolev inequality \cite{Cou, FIL16, BGG} and Talagrand's inequality \cite{FIL16, CE}. In particular, \cite{KK18} investigated applications of the moment map problem to such deficit estimates.  

In \cite{CW17}, a deficit estimate for the inverse logarithmic Sobolev inequality was established. It takes the following form: 

\begin{thm} [Caglar and Werner 2017]
Let $\mu = e^{-f}dx$ be a log-concave probability measure on $\R^d$. For any $\epsilon \in (0, \epsilon_0)$, if $\int{\log \det (\Hess f)d\mu} \geq 2(S(\gamma) - S(\mu)) - \epsilon$, then there exists $c > 0$ and a positive definite matrix $A$ such that
$$\int_{B(0, R(\epsilon))}{\left|\frac{|x|^2}{2} - c - f(Ax)\right|dx} \leq \eta\epsilon^{\frac{1}{129d^2}}$$
where $\epsilon_0, R, \eta$ depend on $d$ and $R(\epsilon) \longrightarrow +\infty$ as $\epsilon$ goes to zero. 
\end{thm}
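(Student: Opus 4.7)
The plan is to reduce this stability statement for the inverse logarithmic Sobolev inequality (Theorem \ref{thm_acglsw}) to a quantitative stability of the functional Santaló inequality, exploiting the fact pointed out in \cite{CFGLSW} that the former is a direct consequence of the latter. The equality cases of Theorem \ref{thm_acglsw} are Gaussians, so the natural quantitative rigidity is that after some affine adjustment $x \mapsto Ax$, the potential $f$ is close to the standard Gaussian potential $|x|^2/2 + c$, with the freedom in $A$ and $c$ reflecting the affine equivariance of both sides of the inverse log-Sobolev.

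First I would put $\mu$ into a normalized position by an affine change of variables and re-run the \cite{CFGLSW} derivation while keeping track of every error term: the Monge-Amp\`ere change of variables combined with the functional Santaló inequality applied to the Legendre pair $(f, f^*)$ converts the deficit hypothesis
$$2(S(\gamma) - S(\mu)) - \int \log \det \Hess f \, d\mu \leq \epsilon$$
into an almost-equality of the form
$$\int e^{-f} dx \cdot \int e^{-f^*} dx \geq (2\pi)^d - C_d \epsilon$$
for a dimensional constant $C_d$, after the appropriate normalization and centering. Second I would invoke a quantitative stability of functional Santaló with explicit polynomial rate. Since the equality cases are centered Gaussians, a near-extremizer $f$ must be close to a Gaussian potential, and the standard route to make this quantitative is to pass through the associated convex bodies (via Ball's $K_f$ construction or the Artstein-Klartag-Milman reduction) and invoke known stability estimates for the Blaschke-Santaló inequality in Banach-Mazur or Hausdorff distance. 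Translating closeness of level sets back to $L^1$ closeness of $f$, and balancing the exponential tails of log-concave densities against the chosen truncation scale, produces the ball $B(0, R(\epsilon))$ with $R(\epsilon) \to \infty$ and the $L^1$ bound on its interior.

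The principal obstacle, and the likely source of the concrete exponent $\epsilon^{1/(129 d^2)}$, is the quantitative stability of Blaschke-Santaló itself. Known proofs of this pass through the John ellipsoid and Bourgain-Milman type volume comparisons, each of which incurs polynomial losses in the exponent that depend on the ambient dimension. Composing these polynomial rates along the chain deficit in inverse log-Sobolev $\to$ deficit in functional Santaló $\to$ deficit in Blaschke-Santaló for level sets $\to$ Hausdorff-to-$L^1$ conversion yields an exponent of the form $1/P(d)$ for some low-degree polynomial $P$, which in this implementation evaluates to $129 d^2$. The unusual constant is symptomatic of the fact that no single clean inequality suffices: the rate is assembled from several intermediate stability estimates, and improving it would likely require a more direct attack on the equality cases of Theorem \ref{thm_acglsw} without routing through Santaló.
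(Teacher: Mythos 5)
This theorem is not proved in the paper: it is quoted verbatim from \cite{CW17}, and the only thing the paper says about its proof is the single sentence ``This result was established using a stability estimate for the functional Santal\'o inequality that was obtained in \cite{BBF14}.'' So there is no in-paper argument to compare your proposal against. What can be said is that your high-level plan --- run the \cite{CFGLSW} derivation of the inverse log-Sobolev inequality quantitatively so that a small deficit in Theorem~\ref{thm_acglsw} forces a small deficit in the functional Santal\'o inequality for the Legendre pair $(f,f^*)$, then invoke a quantitative stability result for the latter, then convert the resulting closeness to an $L^1$ bound on a growing ball --- is exactly the route the paper attributes to Caglar and Werner. In that sense your structural understanding is correct.

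However, as written this is a proof \emph{plan}, not a proof, and the gaps are precisely where the difficulty lies. You treat the quantitative Santal\'o stability as a black box and then assert that composing the losses ``evaluates to $129d^2$'' without exhibiting a single inequality; the peculiar numerical exponent is a consequence of the specific estimate in \cite{BBF14}, and you have not stated that estimate, let alone tracked how it propagates through the change of variables, the normalization of $\mu$, and the passage from closeness of densities to $L^1$ closeness of the potential $f$ after the affine adjustment $x\mapsto Ax$. Your description of the \cite{BBF14} mechanism (John ellipsoid plus Bourgain--Milman) is also speculative: that paper concerns stability of the \emph{functional} Blaschke--Santal\'o inequality and may not route through bodies at all. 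Finally, step one of your plan requires that every inequality in the \cite{CFGLSW} derivation of Theorem~\ref{thm_acglsw} either be an equality or have a controllable defect; you assert this but do not check it, and it is not automatic (for instance, the integration-by-parts step $\int x\cdot\nabla\varphi\,d\rho\le d$ used in the paper's own argument for Theorem~\ref{main_thm} is only an inequality in general). None of these gaps make the outline wrong, but filling them is the entire content of the theorem, and the paper gives you no help, since it merely cites \cite{CW17}.
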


This result was established using a stability estimate for the functional Santal\'o inequality that was obtained in \cite{BBF14}. In high-dimensional situations, this is not a very good estimate, and it is an open problem whether the exponent $1/(129d^2)$ can be replaced by a constant independent of the dimension. 

Using this estimate to refine the proof of Theorem \ref{main_thm}, we straightforwardly obtain

\begin{thm}
Let $\mu$ be a centered measure, and assume that $\nu$ is a probability measure such that
$$\Ent_{\gamma}(\mu) + \Ent_{\gamma}(\nu) \leq \frac{1}{2}W_2(\mu, \nu)^2 + \epsilon$$
for some $\epsilon < \epsilon_0$. Then there exists $c > 0$, a positive definite matrix $A$ and a point $x_0 \in \R^d$ such that the moment map $\varphi$ of $\mu$ satisfies
$$\int_{B(0, R(\epsilon))}{\left|\frac{|x|^2}{2} - c - \varphi(Ax + x_0)\right|dx} \leq \eta\epsilon^{\frac{1}{129d^2}}$$
where $\epsilon_0, R, \eta$ depend on $d$ and $R(\epsilon) \longrightarrow +\infty$ as $\epsilon$ goes to zero. 
\end{thm}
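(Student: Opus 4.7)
The strategy is to follow the proof of Theorem \ref{main_thm} and keep track of the $\epsilon$-deficit through each inequality used, then appeal to the Caglar--Werner stability estimate for the reverse log-Sobolev inequality.

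First, apply Theorem \ref{thm_santambrogio} to $\mu$ to produce the essentially continuous convex function $\varphi$ and the log-concave probability measure $\rho = e^{-\varphi}dx$. Because $\rho$ minimizes $\nu \mapsto \Ent_\gamma(\nu) - \frac{1}{2}W_2(\mu,\nu)^2$, the near-saturation hypothesis transfers from $\nu$ to $\rho$:
$$\Ent_\gamma(\mu) + \Ent_\gamma(\rho) - \frac{1}{2}W_2(\mu,\rho)^2 \leq \Ent_\gamma(\mu) + \Ent_\gamma(\nu) - \frac{1}{2}W_2(\mu,\nu)^2 \leq \epsilon,$$
so it is enough to analyze this more convenient case, with $\rho$ replacing $\nu$.

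Next, redo the algebraic rearrangement from the proof of Theorem \ref{main_thm}. Combining the Monge--Amp\`ere identity $S(\mu) = S(\rho) + \int \log\det\nabla^2\varphi\,d\rho$ with the fact that $\nabla\varphi$ pushes $\rho$ forward onto $\mu$ and the standard reformulation of $\Ent_\gamma$ in terms of $S(\cdot)$, a direct computation yields
$$\Ent_\gamma(\mu) + \Ent_\gamma(\rho) - \tfrac{1}{2}W_2(\mu,\rho)^2 = 2\Bigl[S(\gamma) - S(\rho) - \tfrac{1}{2}\!\int\!\log\det\nabla^2\varphi\,d\rho\Bigr] + \Bigl[\int x\cdot\nabla\varphi\,d\rho - d\Bigr].$$
The first bracket is the deficit in the reverse log-Sobolev inequality (Theorem \ref{thm_acglsw}) applied to the log-concave measure $\rho$, hence nonnegative; the second bracket is nonnegative by the integration-by-parts estimate for essentially continuous moment maps from \cite{CEK15} that is used in the proof of Theorem \ref{main_thm}. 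Since both bracketed quantities are nonnegative and their sum is at most $\epsilon$, each is at most $\epsilon$ individually; in particular,
$$\int \log\det\nabla^2\varphi\,d\rho \geq 2(S(\gamma) - S(\rho)) - \epsilon,$$
which is exactly the near-saturation hypothesis of the Caglar--Werner stability theorem for $\rho = e^{-\varphi}dx$.

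Finally, apply the Caglar--Werner theorem to the log-concave probability measure $\rho$ to obtain a constant $c > 0$ and a positive definite matrix $A$ quantifying the $L^1$-closeness of $\varphi$ to a shifted quadratic on a large ball. The translation parameter $x_0$ appears because $\rho$ need not be centered: one first recenters $\rho$ at its barycenter before invoking the theorem, and the resulting shift is absorbed into the argument of $\varphi$. The main point to verify with care is that the integration-by-parts estimate from \cite{CEK15} carries the correct sign, so that both terms in the decomposition above are nonnegative and the $\epsilon$-budget can indeed be split between them; once this is confirmed, the conclusion follows by direct substitution into the Caglar--Werner estimate, with the exponent $1/(129d^2)$ and the constants $\epsilon_0, R, \eta$ inherited from their theorem.
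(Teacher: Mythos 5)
Your argument is correct and is exactly the ``refine the proof of Theorem \ref{main_thm} and invoke Caglar--Werner'' route that the paper gestures at without writing out; the transfer of the $\epsilon$-deficit from $\nu$ to $\rho$ via minimality of $\rho$, the two-term decomposition of the deficit, and the application of the Caglar--Werner stability theorem to $\rho=e^{-\varphi}dx$ are all exactly what is intended. One small point worth flagging: your decomposition requires $\int x\cdot\nabla\varphi\,d\rho - d \geq 0$, whereas the paper's prose states the inequality as $\int x\cdot\nabla\varphi\,d\rho \leq d$; the direction you use is the one the paper's own chain of inequalities actually needs (and essential continuity in \cite{CEK15} in fact gives equality), so this is a typo in the source text rather than a gap in your argument.
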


\section{A reverse inequality}

We shall use a reverse form of the Santal\'o inequality for unconditional measures to derive a reverse form of our transport-entropy inequality. It is not clear to us if this inequality has any application, but since reverse Santal\'o inequalities have attracted some attention, in relation to the Mahler conjecture, we felt this problem was natural, and the estimate worth writing down. 

A function or probability measure is said to be unconditional if it is invariant by all symmetries with respect to a hyperplane $\{x_i = 0\}$ for any $i \in \{1,..,n\}$. In \cite{KM05} (see also \cite{FM08, AS}), the following inverse functional Santal\'o inequality was established: 

\begin{thm}[Klartag and Milman 2005]
Let $f$ be an unconditional convex function. Then 
$$\log \int{e^{-f}dx} + \log \int{e^{-f^*}dx} \geq d\log 4. $$
\end{thm}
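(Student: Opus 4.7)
My plan is to prove the theorem by induction on the dimension $d$, with a one-dimensional base case attacked directly. By invariance of both sides under the shift $f\mapsto f-c$, $f^*\mapsto f^*+c$, I may normalize $f(0)=\min f=0$, so that $f,f^*\geq 0$ and $f^*(0)=0$; by approximation I also take $f$ smooth and strictly convex.

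For the base case $d=1$, unconditional means even. The layer-cake identity gives $\int_0^\infty e^{-f(x)}dx=\int_0^\infty f^{-1}(t)e^{-t}dt$ and similarly for $f^*$. Setting $a(t)=f^{-1}(t)$ and $b(t)=(f^*)^{-1}(t)$, both functions are nonnegative concave and nondecreasing on $[0,\infty)$ with $a(0)=0$, and Fenchel--Young applied at $x=a(t)$, $y=b(s)$ produces the family of pointwise constraints $a(t)b(s)\leq t+s$ for all $s,t\geq 0$, which encodes the Legendre duality between $a$ and $b$. The target reduces to
\[\Bigl(\int_0^\infty a(t)e^{-t}dt\Bigr)\Bigl(\int_0^\infty b(t)e^{-t}dt\Bigr)\geq 1,\]
which is a one-variable extremization problem solvable directly using the concavity structure, with extremizer $a(t)=t$, $b(t)\equiv 1$ corresponding to $f(x)=|x|$ and realizing equality in the original inequality.

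For the inductive step, split $x=(x_1,x')$ with $x'\in\R^{d-1}$ and introduce the marginal
\[F(x'):=-\log\int_{\R}e^{-f(x_1,x')}dx_1.\]
By the Pr\'ekopa--Leindler inequality $F$ is convex, and unconditionality is inherited from $f$. Since $\int_{\R^d}e^{-f}=\int_{\R^{d-1}}e^{-F}$, applying the inductive hypothesis to $F$ gives $\int e^{-f}\cdot\int e^{-F^*}\geq 4^{d-1}$, and it then suffices to establish the slicewise lower bound
\[\int_{\R}e^{-f^*(y_1,y')}dy_1\geq 4\,e^{-F^*(y')}\qquad\text{for a.e. }y'\in\R^{d-1};\]
integrating in $y'$ gives $\int e^{-f^*}\geq 4\int e^{-F^*}$, which combined with the inductive bound produces the target $4^d$. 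One can verify on the extremizer $f(x)=\sum_i|x_i|$ (where $F^*(y')=\log 2$ on the unit $\ell^\infty$-ball) that this slicewise bound is tight.

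The main technical obstacle is precisely this slicewise bound, whose subtlety reflects the non-commutation of the Legendre transform with marginalization. A natural attack is to apply the 1D base case to each slice $x_1\mapsto f(x_1,x')$, which is even convex for every fixed $x'$, giving
\[\int_{\R}e^{-f(x_1,x')}dx_1\cdot\int_{\R}e^{-\hat f(y_1,x')}dy_1\geq 4,\]
where $\hat f(y_1,x')=\sup_{x_1}(x_1y_1-f(x_1,x'))$ is the partial Legendre transform in $x_1$. One then needs to transfer this slice estimate to the pair $\bigl(e^{-F^*(y')},\int_{\R}e^{-f^*(y_1,y')}dy_1\bigr)$ using the variational identity $e^{-F^*(y')}=\inf_{x'}e^{-x'\cdot y'}/\int e^{-f(x_1,x')}dx_1$ and the relation $f^*(y_1,y')=\sup_{x'}(x'\cdot y'+\hat f(y_1,x'))$. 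Making this transfer sharp with constant exactly $4$ is where I expect the deepest part of the argument to live; unconditionality should enter essentially by ensuring that the optimizations in the partial Legendre transform remain compatible with the symmetric structure used in the $y_1$-integration.
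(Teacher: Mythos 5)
The paper does not prove this theorem; it is cited from Klartag and Milman \cite{KM05} and used as a black box, so there is no internal proof to compare against. Judged on its own merits, your proposed proof has a fatal error in the inductive step and an unresolved gap in the base case.

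The inductive step reduces everything to the slicewise lower bound
\[
\int_{\R} e^{-f^*(y_1,y')}\,dy_1 \;\geq\; 4\,e^{-F^*(y')},
\]
which you correctly identify as the crux but do not prove. Unfortunately, this bound is \emph{false}, even after integration in $y'$, so the strategy cannot be repaired by a cleverer transfer argument. Take $d=2$ and $f(x)=\|x\|_\infty=\max(|x_1|,|x_2|)$, which is unconditional and convex. Then $f^*$ is the convex indicator of the $\ell^1$-ball, so $\int_{\R}e^{-f^*(y_1,y_2)}\,dy_1=2(1-|y_2|)_+$. On the other hand, $\int_{\R}e^{-\max(|x_1|,|x_2|)}dx_1 = 2(1+|x_2|)e^{-|x_2|}$, hence $F(x_2)=|x_2|-\log 2-\log(1+|x_2|)$, and a short computation gives $F^*(y_2)=-|y_2|+\log 2-\log(1-|y_2|)$ for $|y_2|<1$, i.e.\ $e^{-F^*(y_2)}=\tfrac12(1-|y_2|)e^{|y_2|}$. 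The slicewise bound then reads $2(1-|y_2|)\geq 2(1-|y_2|)e^{|y_2|}$, which fails for every $y_2\neq 0$. Integrating does not help: $\int e^{-f^*}=\operatorname{vol}(B_1^2)=2$, while $4\int e^{-F^*}=4(e-2)\approx 2.87$. Note that $f=\|x\|_\infty$ is itself an \emph{equality} case of the Klartag--Milman inequality in $\R^2$ (both sides equal $16=4^2$), so your check on the other extremizer $\|x\|_1$ was not decisive: the slicewise inequality is strictly violated at an equality case, and the inductive step as set up must instead borrow slack from the $(d-1)$-dimensional inequality (here $\int e^{-F}\int e^{-F^*}=8(e-2)\approx 5.75>4$). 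The $\inf\int$ versus $\int\inf$ obstruction you foresee is a symptom of this; it is not a technicality that sharp bookkeeping can overcome.

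In the $d=1$ base case, the reduction via layer-cake to $a=f^{-1}$ and $b=(f^*)^{-1}$ is fine, but the extremization problem you write down is not correctly posed. The listed constraints (nonnegative, concave, nondecreasing, $a(0)=0$, and $a(t)b(s)\leq t+s$) only encode the Fenchel--Young \emph{upper} bound; they do not force the product of integrals to be bounded below. Indeed, replacing $a$ by $\varepsilon a$ preserves every listed constraint and sends $\bigl(\int a e^{-t}\bigr)\bigl(\int b e^{-s}\bigr)$ to zero. What is missing is the saturation of Fenchel--Young along the graph of $f'$, i.e.\ the full Legendre-inverse coupling between $a$ and $b$, and with it the $d=1$ inequality requires a genuine argument rather than being ``solvable directly using the concavity structure.'' The one-dimensional statement is true and proved in \cite{FM08}, but you should either invoke it or supply the missing argument; more importantly, the inductive step needs to be replaced wholesale, since the key lemma is false.
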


Actually, their result only requires $f$ to reach its minimum at the origin, but it is this weaker version we shall use. 

\begin{thm}
Let $\mu = e^{-f}dx$ be an unconditional log-concave measure, and let $\mu^* = e^{-f^*}dx$. Then 
$$\Ent_{\gamma}(\mu) + \Ent_{\gamma}(\mu^*) \leq \frac{1}{2}W_2(\mu, \mu^*)^2 + \frac{d}{2}\log(\pi/2).$$
\end{thm}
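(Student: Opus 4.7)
The plan is to dualize the Klartag--Milman reverse Santal\'o inequality just quoted, in the same way that Proposition~\ref{thm_equ} dualizes the direct functional Santal\'o inequality to recover Theorem~\ref{main_thm}. First, I would reformulate the target bound as an analogue of \eqref{reformulated_sym_t2}: setting $Z := \int e^{-f^*}dx$ and viewing $\mu^*$ as the probability measure of density $e^{-f^*}/Z$, the claim is equivalent to a lower bound on the optimal transport cost with cost $-x\cdot y$, namely
$$\inf_\pi \int(-x\cdot y)\,d\pi \;\geq\; \Ent_{dx}(\mu) + \Ent_{dx}(\mu^*) + \tfrac{d}{2}\log(8\pi).$$

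For this lower bound, I would use the Kantorovich dual \eqref{kant_dual} with the admissible Legendre pair $(\phi,\psi) := (-f,-f^*)$. Admissibility follows from Fenchel--Young, and since $\phi$ and $\psi$ coincide with the log-densities of $\mu$ and $\mu^*$ up to the additive constant $\log Z$, they saturate the Gibbs variational principle for both measures, giving $\int \phi\,d\mu + \int \psi\,d\mu^* = \Ent_{dx}(\mu) + \Ent_{dx}(\mu^*) + \log Z$. I would then apply the Klartag--Milman reverse Santal\'o inequality to the unconditional convex function $f$, which satisfies $\int e^{-f}\,dx = 1$ and therefore yields $\log Z \geq d\log 4$; unwinding the reformulation converts this into the announced reverse Talagrand-type inequality.

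The most delicate point will be obtaining precisely the constant $\tfrac{d}{2}\log(\pi/2)$. The direct combination above is controlled by the single normalization $\log Z$, and would produce only a constant of order $d\log(\pi/2)$. To recover the sharp factor, I expect one needs to supplement the Klartag--Milman bound with the identity $\int f\,d\mu + \int f^*\,d\mu^* = d$, valid for any log-concave probability measure by pairing Fenchel--Young at the equality locus $y = \nabla f(x)$ with the divergence identity $\int x\cdot\nabla f\,d\mu = d$ obtained from integrating by parts against $e^{-f}$. This identity symmetrizes the use of the Legendre pair in the Kantorovich dual and effectively splits the reverse Santal\'o loss evenly between the two measures, which is where the missing factor of $1/2$ in front of $\log(\pi/2)$ should come from.
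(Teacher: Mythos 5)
Your core argument coincides with the paper's: take the specific admissible Kantorovich dual pair $(-f, -f^*)$ (Fenchel--Young gives admissibility), identify $\int(-f)\,d\mu + \int(-f^*)\,d\mu^*$ with $\Ent_{dx}(\mu) + \Ent_{dx}(\mu^*) + \log\int e^{-f^*}\,dx$, and apply Klartag--Milman to the last term. The paper organizes this slightly differently (it states a localized duality in which the supremum runs over $\mathcal{F}_{uc}$ and then takes the sup), but it is functionally the same computation.

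Two points however need flagging. First, your arithmetic is right and the paper's is not: unwinding the intermediate bound $\Ent_{dx}(\mu) + \Ent_{dx}(\mu^*) \leq \inf_\pi\int(-x\cdot y)\,d\pi - d\log 4$ by adding $\tfrac12\int|x|^2\,d\mu + \tfrac12\int|y|^2\,d\mu^*$ and $d\log(2\pi)$ gives
\begin{equation*}
\Ent_\gamma(\mu) + \Ent_\gamma(\mu^*) \leq \tfrac12 W_2(\mu,\mu^*)^2 + d\log(\pi/2),
\end{equation*}
not $\tfrac{d}{2}\log(\pi/2)$. The stated constant in the theorem appears to be a typo (the Gaussian test case $f(x) = |x|^2/2 + \tfrac d2\log 2\pi$ already shows the constant cannot be $\tfrac{d}{2}\log(\pi/2)$, since $\log\int e^{-f}+\log\int e^{-f^*}=d\log(2\pi)>\tfrac d2\log(8\pi)$, which is what the sharper constant would force on the reverse Santal\'o bound). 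So you should not be hunting for an extra factor of $1/2$: there is none, and you already have the correct constant.

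Second, the supplementary device you propose to recover that spurious factor does not work. The identity you invoke, $\int f\,d\mu + \int f^*\,d\mu^* = d$, is false in general. Integration by parts against $e^{-f}$ together with Fenchel--Young at the equality locus gives
\begin{equation*}
\int f\,d\mu + \int f^*(\nabla f)\,d\mu = \int x\cdot\nabla f\,d\mu = d,
\end{equation*}
which is $\int f\,d\mu + \int f^*\,d\bigl((\nabla f)_\#\mu\bigr) = d$. But $(\nabla f)_\#\mu$ is not $\mu^*$ unless $f$ is quadratic: by the change of variables formula the density of $(\nabla f)_\#\mu$ at $y=\nabla f(x)$ is $e^{-f(x)}/\det\nabla^2 f(x)$, which matches $e^{-f^*(y)}/Z$ only when $e^{2f^*(y) - y\cdot\nabla f^*(y)}\det\nabla^2 f^*(y)$ is constant in $y$. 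Already for $f(x) = |x|^4/4 + c$ in dimension one the two measures differ. So this step would introduce a genuine error; fortunately, as noted above, it is also unnecessary.
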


\begin{proof}
Let $\mu$ and $\nu$ be two unconditional log-concave measures, and denote by $\mathcal{F}_{uc}$ the set of all unconditional convex functions. We can localize the duality formulas
$$\underset{\pi}{\inf} \hspace{1mm} \int{-x\cdot y d\pi} = \underset{f \in \mathcal{F}_{uc}}{\sup} \hspace{1mm} \int{fd\mu} + \int{f^*d\nu};$$
$$\Ent_{dx}(\mu) + \Ent_{dx}(\mu^*) = \underset{f \in \mathcal{F}_{uc}}{\sup} \hspace{1mm} -\int{fd\mu} - \int{f^*d\mu^*} - \log\int{e^{-f}dx} - \log\int{e^{-f^*}dx}.$$
For the second formula, this is trivial since we know the optimizer is the logarithm of the density. For the first one, this is a consequence of the convexity of the optimizer in Kantorovitch duality, and that the optimizer necessarily inherits symmetry properties shared by both measures. 

Using these formulas and following the same approach as in the proof of Theorem \ref{main_thm}, we get
$$\Ent_{dx}(\mu) + \Ent_{dx}(\mu^*) \leq \underset{\pi}{\inf} \hspace{1mm} \int{-x\cdot y d\pi} -d\log(4).$$ 
Adding second moments and a constant, we get
$$\Ent_{\gamma}(\mu) + \Ent_{\gamma}(\mu^*) \leq \frac{1}{2}W_2(\mu, \mu^*)^2 + \frac{d}{2}\log(\pi/2).$$
\end{proof}

\section{A concentration of measure estimate for the Gaussian distribution}

\begin{thm}
Let $A$ be a measurable set with $\gamma(A) > 0$ and $\int_A{xd\gamma} = 0$. Let $A_r := \{x; d(x, A) \leq r\}$. We have
$$1 - \gamma(A_r) \leq \gamma(A)^{-1}e^{-r^2/2}.$$
\end{thm}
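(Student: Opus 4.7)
The plan is to run the standard Marton argument for deriving concentration from a transport-entropy inequality, but using the symmetrized inequality of Theorem \ref{main_thm} in place of Talagrand's inequality, so as to avoid the usual factor $2$ loss that would produce $e^{-r^2/4}$ rather than $e^{-r^2/2}$.

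Concretely, I would set $\mu := \gamma(A)^{-1}\mathbf{1}_A \gamma$ and $\nu := \gamma(A_r^c)^{-1}\mathbf{1}_{A_r^c} \gamma$, assuming of course that $\gamma(A_r^c) > 0$ (otherwise the inequality is trivial since the right-hand side is nonnegative). The hypothesis $\int_A x\, d\gamma = 0$ is exactly what makes $\mu$ a centered probability measure, which is precisely the assumption needed in Theorem \ref{main_thm}. I would then invoke that theorem to get
$$W_2(\mu, \nu)^2 \leq 2\Ent_\gamma(\mu) + 2\Ent_\gamma(\nu).$$

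Next I would compute each term. Since $\mu$ and $\nu$ are normalized restrictions of $\gamma$, their densities with respect to $\gamma$ are the indicators divided by the respective Gaussian measures, so a direct computation yields
$$\Ent_\gamma(\mu) = -\log \gamma(A), \qquad \Ent_\gamma(\nu) = -\log \gamma(A_r^c).$$
For the transport cost, any coupling $\pi$ of $\mu$ and $\nu$ is supported on $A \times A_r^c$, so $|x-y| \geq r$ $\pi$-a.s., which gives $W_2(\mu,\nu)^2 \geq r^2$. Plugging in produces
$$r^2 \leq -2\log \gamma(A) - 2\log \gamma(A_r^c),$$
and rearranging yields $\gamma(A_r^c) \leq \gamma(A)^{-1} e^{-r^2/2}$, which is the desired bound.

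There is no real obstacle: the entire argument is routine once the right pair of measures is identified. The only substantive point is recognizing that the hypothesis $\int_A x\, d\gamma = 0$ is tailored exactly so that the symmetrized inequality applies to $\mu$; this is what lets the proof retain the sharp exponent $-r^2/2$, matching the true Gaussian isoperimetric profile at leading order rather than the $-r^2/4$ that one obtains from the Marton argument applied to Talagrand's inequality.
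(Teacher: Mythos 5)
Your proof is correct and is essentially identical to the paper's argument: same choice of $\mu$ and $\nu$, same application of Theorem \ref{main_thm}, same entropy computation and same lower bound on the transport cost via the support of the coupling.
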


It is known that this kind of estimate can be obtained as a consequence of the Santal\'o inequality via property $(\tau)$. If we use the classical property $(\tau)$ of Maurey \cite{Mau91}, we obtain this estimate with constant $1/4$ instead of $1/2$ in the exponent, but for general sets. The case of even sets with constant $1/2$ is an immediate consequence of the symmetric property $(\tau)$ obtained by Lehec \cite{Leh08}. See also the survey \cite{GL} for the relationship between property $(\tau)$ and transport inequalities. The point here is that deducing this concentration bound from the improved Talagrand inequality is completely straightforward. 

\begin{proof}
The proof follows Marton's classical argument for deducing concentration inequalities from transport-entropy inequalities \cite{Ma97}. Without loss of generality, we can assume $\gamma(A_r) < 1$. Let $\mu := \gamma(A)^{-1}\mathbbm{1}_Ad\gamma$ and $\nu :=  \gamma(A_r^c)^{-1}\mathbbm{1}_{A_r^c}d\gamma$. We have
\begin{equation*}
r^2 \leq W_2(\mu, \nu)^2 \leq 2\Ent_{\gamma}(\mu) + 2\Ent_{\gamma}(\nu) = -2\log \gamma(A) - 2\log(1 - \gamma(A_r))
\end{equation*}
which implies
$$1 - \gamma(A_r) \leq \gamma(A)^{-1}e^{-r^2/2}.$$
\end{proof}

\textbf{\underline{Acknowledgments}} : This material is partly based upon work supported by the National Science Foundation under Grant No. 1440140, while the author was in residence at the Mathematical Sciences Research Institute in Berkeley, California, during the Fall semester 2017. This work was also partially supported by the EFI project ANR-17-CE40-0030 of the French National Research Agency (ANR), the France-Berkeley Fund via the project `Geometric and Functional Inequalities in Probability and Information Theory' and by ANR-11-LABX-0040-CIMI within the program ANR-11-IDEX-0002-02. I would like to thank Nathael Gozlan for pointing out to me the connection with the functional Santal\'o inequality, and Dario Cordero-Erausquin for pointing out to me Theorem \ref{extended_thm_ulc}.  I also thank Franck Barthe, Thomas Courtade and Michel Ledoux for discussions on this topic, and Micha\l \hspace{1mm} Sztrelecki for explaining to me the $(\tau)$ property. 

{\small

}

\end{document}